\newcommand{\feit}[1]{\mathbf{#1}}
\newcommand{\mon}[1]{\feit{x}^{\feit{#1}}}
\newcommand{\polr}[3]{\field{#1}[#2_1,\ldots,#2_{#3}]}
\newcommand{\field}[1]{\mathbbm{#1}}
\newcommand{\hide}[1]{}
\newcommand{\Bb}[2]{\beta_{#1,#2}}
\newcommand{\BbModulk}[3]{\beta_{#1,#2}(#3;\field{k})}
\newcommand{\Bnumk}[3]{\beta_{#1,#2}(I_{#3};\field{k})}
\newcommand{\Bnum}[3]{\beta_{#1,#2}(I_{#3})}
\newcommand{\nnn}{\mathbb{N}_{0}^{n}}
\newcommand{\nn}{\mathbb{N}_{0}}
\newcommand{\dd}{\Delta}
\newcommand{\ind}[1]{I(#1)}
\newcommand{\bas}[1]{B(#1)}
\newcommand{\ino}[2]{\mathcal{F}_{#2}(#1)}
\newcommand{\ff}[2]{f_{#2}(#1)}
\newcommand{\dual}[1]{\overline{#1}}
\newcommand{\tru}[2]{#1^{(#2)}}
\newcommand{\trures}[3]{{#1^{(#2)}}_{|#3}}
\newcommand{\elo}[2]{#1_{(#2)}}
\newcommand{\iskel}[2]{#1^{(#2)}}
\newcommand{\hilb}[1]{H(#1)}
\newcommand{\srid}[1]{I_{#1}}
\newcommand{\srring}[1]{S/\srid{#1}}
\newcommand{\Bringk}[3]{\beta_{#1,#2}(\srring{#3};\field{k})}
\newcommand{\Bring}[3]{\beta_{#1,#2}(\srring{#3})}
\newcommand{\BringKropp}[4]{\beta_{#1,#2}(\srring{#3};#4)}
\DeclareMathOperator{\ppd}{p.d.\,}
\DeclareMathOperator{\depth}{depth\,}
\newcommand{\pd}[1]{\ppd\,\srid{#1}}
\DeclareMathOperator{\Kdim}{dim\,}
\DeclareMathOperator{\im}{im} 
\DeclareMathOperator{\sign}{sign}
\theoremstyle{plain}
\newtheorem{theorem}{Theorem}[section]
\newtheorem{lemma}{Lemma}[section]
\newtheorem{proposition}{Proposition}[section]
\newtheorem{corollary}{Corollary}[section]
\theoremstyle{definition}
\newtheorem{definition}{Definition}[section]
\newtheorem{example}{Example}[section]
\theoremstyle{remark}
\newtheorem*{remark}{Remark}
\begin{document}
\author{Jan Roksvold}
\title{Betti numbers of skeletons}
\address{Department of Education, UiT The Arctic University of Norway \\ N-9037 Troms\o, Norway}
\email{jan.n.roksvold@uit.no (Corresponding author)}
\author{Hugues Verdure}
\address{Department of Mathematics, UiT The Arctic University of Norway \\ N-9037 Troms\o, Norway}
\email{hugues.verdure@uit.no}


\begin{abstract}
\noindent We demonstrate that the Betti numbers associated to an $\nn$-graded minimal free resolution of the Stanley-Reisner ring $\srring{\iskel{\dd}{d-1}}$ of the $(d-1)$-skeleton of a simplicial complex $\Delta$ of dimension $d$ can be expressed as a $\mathbb{Z}$-linear combination of the corresponding Betti numbers of $\Delta$. An immediate implication of our main result is that the projective dimension of $\srring{\iskel{\dd}{d-1}}$ is at most one greater than the projective dimension of $\srring{\dd}$, and it thus provides a new and direct proof of this. Our result extends immediately to matroids and their truncations. A similar result for matroid elongations can not be hoped for, but we do obtain a weaker result for these. The result does not apply to generalized skeleton ideals.  
\end{abstract}
\maketitle
\section{Introduction}
\noindent In this paper we investigate certain aspects of the relationship between an $\nn$-graded minimal free resolution of the Stanley-Reisner ring of a simplicial complex and those associated to its skeletons. Our main result is Theorem \ref{mainThe}, which says that each of the Betti numbers associated to an $\nn$-graded minimal free resolution of $\srring{\iskel{\dd}{d-1}}$, where $\srid{\iskel{\dd}{d-1}}$ is the ideal generated by monomials corresponding to nonfaces of the $(d-1)$-skeleton of a finite simplicial complex $\dd$, can be expressed as a $\mathbb{Z}$-linear sum of the Betti numbers associated to $\srring{\dd}$. 

Previous results on the Stanley-Reisner rings of skeletons include the classic \cite[Corollary 2.6]{Hib} which states that \begin{equation}\label{Hibis}\depth\srring{\dd}=\max\{j:\iskel{\dd}{j-1} \text{ is Cohen-Macaulay}\}.\end{equation} This result was later generalized to arbitrary monomial ideals in \cite[Corollary 2.5]{HJZ} (we shall return to this in our final section, where we give a counterexample showing that our main result can not be generalized in the same way). By the Auslander-Buchsbaum identity, it follows from (\ref{Hibis}) that \[\pd{\dd}\leq\ppd{\srring{\iskel{\dd}{d-1}}}\leq 1+ \ppd{\srring{\dd}}.\] From the latter of these inequalities it is easily demonstrated, again by using the Auslander-Buchsbaum identity, that every skeleton of a Cohen-Macaulay simplicial complex is Cohen-Macaulay - a fact which was proved in \cite[Corollary 2.5]{Hib} as well.  

That $\ppd{\srring{\iskel{\dd}{d-1}}}\leq 1+\ppd{\srring{\dd}}$ can also be seen as an immediate consequence of our main result, and Theorem \ref{main} thus provides a new and direct proof of this and therefore also of the fact that the Cohen-Macaulay property is inherited by skeletons.

The projective dimension of Stanley-Reisner rings has seen recent research interest. Most notably, it was demonstrated in \cite[Corollary 3.33]{MV} that \[\ppd{\srring{\dd}}\geq \max\{|C|:C \text{ is a circuit of the Alexander dual } \dd^*\text{ of }\dd\},\] with equality if $\srring{\dd}$ is sequentially Cohen-Macaulay.

Our main result extends immediately to a matroid $M$ and its truncations. Such matroid truncations have themselves seen recent research interest. Examples of this are \cite{JP}, which contains the strengthening of a result by Brylawski \cite[Proposition 7.4.10]{Bry} concerning the representability of truncations, and \cite[Proposition 15]{Bri}, where it is demonstrated that the Tutte polynomial of $M$ determines that of its truncation $\tru{M}{1}$.  

Corresponding to our main result applied to matroid truncations, we give a considerably weaker result concerning matroid elongations. It says that the Betti table associated to the elongation of $M$ to rank $r(M)+1$ is equal to the Betti table obtained by removing the second column from the Betti table of $\srring{M}$ - but only in terms of zeros and nonzeros.  

\subsection{Structure of this paper}
\begin{itemize} 
\item In Section \ref{Secprelim} we provide definitions and results used later on. 
\item In Section \ref{SecSimp} we demonstrate that the Betti numbers associated to a $\nn$-graded minimal free resolution of the Stanley Reisner ring of a skeleton can be expressed as a $\mathbb{Z}$-linear combination of the corresponding Betti numbers of the original complex. This leads immediately to a new and direct proof that the property of being Cohen-Macaulay is inherited from the original complex.  
\item In Section \ref{SecMat} we see how our main result applies to truncations of matroids. We also explore whether a similar result can be obtained for matroid elongations.
\item In Section \ref{SecCounter} we give a counterexample demonstrating that our main result does not hold for the generalized skeleton ideals constructed in \cite{HVZ} and \cite{HJZ}.
\end{itemize} 
 
\section{Preliminaries}\label{Secprelim}
\subsection{Simplicial complexes}
\begin{definition}A \emph{simplicial complex} $\dd$ on $E=\{1,\ldots,n\}$ is a collection of subsets of $E$ that is closed under inclusion.
\end{definition}
We refer to the elements of $\dd$ as the \emph{faces} of $\dd$. A \emph{facet} of $\dd$ is a face that is not properly contained in another face, while a \emph{nonface} is a subset of $E$ that is not a face.

\begin{definition}
If $X\subseteq E$, then $\dd_{|X}=\{\sigma\subseteq X:\sigma \in \dd\}$ is itself a simplicial complex. We refer to $\dd_{|X}$ as the \emph{restriction of $\dd$ to $X$}.  
\end{definition}

\begin{definition}
Let $m$ be the cardinality of the largest face contained in $X\subseteq E$. The \emph{dimension} of $X$ is $\dim(X)=m-1$.  
\end{definition}
In particular, the dimension of a face $\sigma$ is equal to $|\sigma|-1$. We define $\dim(\dd)=\dim(E),$ and refer to this as the dimension of $\dd$. 
\begin{definition}[The $i$-skeleton of $\dd$]For $0\leq i\leq \dim(\dd)$, let the $i$-skeleton $\iskel{\dd}{i}$ be the simplicial complex \[\iskel{\dd}{i}=\{\sigma\in\dd:\dim(\sigma)\leq i\}.\]
\end{definition}
In particular, we have $\iskel{\dd}{d}=\dd$. The $1$-skeleton $\iskel{\dd}{1}$ is often referred to as the underlying graph of $\dd$.

\begin{remark}\label{remarkSigm}Whenever $\sigma \in \nnn$ the expression $|\sigma|$ shall signify the sum of the coordinates of $\sigma$. When, on the other hand, $\sigma\subseteq\{1\ldots n\}$, the expression $|\sigma|$ denotes the cardinality of $\sigma$. 
\end{remark}
 
\subsection{Matroids}There are numerous equivalent ways of defining a matroid. It is most convenient here to give the definition in terms of independent sets. For an introduction to matroid theory in general, we recommend e.g.~\cite{Oxl}.
\begin{definition}A \emph{matroid} $M$ consists of a finite set $E$ and a non-empty set $I(M)$ of subsets of $E$ such that:
\begin{itemize} \item $I(M)$ is a simplicial complex. \item If $I_{1},I_{2}\in \ind{M}$ and $|I_{1}|>|I_{2}|$, then there is an $x\in I_{1}\smallsetminus I_{2}$ such that $I_{2}\cup x\in \ind{M}$.\end{itemize} 
\end{definition}
The elements of $\ind{M}$ are referred to as the \emph{independent sets} (of $M$). The \emph{bases} of $M$ are the independent sets that are not contained in any other independent set: in other words, the facets of $\ind{M}$. Conversely, given the bases of a matroid, we find the independent sets to be those sets that are contained in a basis. We denote the bases of $M$ by $\bas{M}$. It is a fundamental result that all bases of a matroid have the same cardinality, which implies that $I(M)$ is a \emph{pure} simplicial complex.

The dual matroid $\dual{M}$ is the matroid on $E$ whose bases are the complements of the bases of $M$. Thus \[\bas{\dual{M}}=\{E\smallsetminus B:B\in \bas{M}\}.\] 
\begin{definition}For $X\subseteq E$, the rank function $r_{M}$ of $M$ is defined by \[r_{M}(X)=\max\{|I|:I \in \ind{M}, I\subseteq X\}.\]\end{definition}

Whenever the matroid $M$ is clear from the context, we omit the subscript and write simply $r(X)$. The rank $r(M)$ of $M$ itself is defined as $r(M)=r_{M}(E)$. Whenever $I(M)$ is considered as a simplicial complex we thus have $r(X)=\dim(X)+1$ for all $X\subseteq E$, and $r(M)=\dim(\ind{M})+1$.

\begin{definition}
If $X\subseteq E$, then $\{I\subseteq X:I \in \ind{M}\}$ form the set of independent sets of a matroid $M_{|X}$ on $X$. We refer to $M_{|X}$ as the \emph{restriction of $M$ to $X$}.  
\end{definition}
In \cite{Lar} the $i$th generalized Hamming weight of a linear code is generalized to matroids as follows.
\begin{definition}
For $1\leq i \leq n-r(M)$, the $i$th higher weight of $M$ is \[d_i(M)=\min\{|X|:X\subseteq E\text{ and }|X|-r(X)=i\}.\] 
\end{definition}
We refer to $\{d_i(M)\}$ as the \emph{higher weights} of $M$.
 
\begin{definition}[Truncation]
The $i$th truncation $\tru{M}{i}$ of $M$ is the matroid on $E$ whose independent sets consist of the independent sets of $M$ that have rank less than or equal to $r(M)-i$. In other words \[\ind{\tru{M}{i}}=\{X \subseteq E:r(X)=|X|, r(X)\leq r(M)-i\}.\]
\end{definition}
Observe that $\tru{M}{i}=\iskel{\ind{M}}{r(M)-i-1}$, whenever $I(M)$ is considered as a simplicial complex. That is, the $i$th truncation corresponds to the $(d-i)$-skeleton. 

\begin{definition}[Elongation]
For $0\leq i\leq n-r(M)$, let $\elo{M}{i}$ be the matroid whose independent sets are $I(\elo{M}{i})=\{\sigma\in E: n(\sigma)\leq i\}$. 
\end{definition}

Since $r(\elo{M}{i})=r(M)+i$, the matroid $\elo{M}{i}$ is commonly referred to as the \emph{elongation} of $M$ to rank $r(M)+i$.
It is straightforward to verify that for $i \in [0,\ldots,n-r(M)]$ we have $\elo{\overline{M}}{i}=\overline{\tru{M}{i}}$.

\subsection{The Stanley-Reisner ideal, Betti numbers, and the reduced chain complex}
Let $\dd$ be an abstract simplicial complex on $E=\{1,\ldots,n\}$. Let $\field{k}$ be a field, and let $S=\polr{k}{x}{n}$. By employing the standard abbreviated notation \[x_1^{\feit{a}(1)}x_2^{\feit{a}(2)}\cdots x_n^{\feit{a}(n)}=\mon{a}\] for monomials, we establish a $1-1$ connection between monomials of $S$ and vectors in $\nnn$. Furthermore, identifying a subset of $E$ with its indicator vector in $\nnn$ (as is done in Definition \ref{sridDef} below)  thus provides a $1-1$ connection between squarefree monomials of $S$ and subsets of $E$. 
\begin{definition}\label{sridDef}
Let $I_{\dd}$ be the ideal in $S$ generated by monomials corresponding to nonfaces of $\dd$. That is, let \[I_{\dd}=\langle\mon{\sigma}:\sigma\notin \dd\rangle.\] We refer to $I_{\dd}$ and $\srring{\dd}$, respectively, as the \emph{Stanley-Reisner ideal} and \emph{Stanley-Reisner ring} of $\dd$. 
\end{definition}

Being a (squarefree) monomial ideal, the Stanley-Reisner ideal, and thus also the Stanley-Reisner ring, permits both the standard $\nn$-grading and the standard $\nnn$-grading. For $\feit{b}\in\nnn$ let $S_{\feit{b}}$ be the $1$-dimensional $\field{k}$-vector space generated by $\feit{x}^{\feit{b}}$, and let $S(\feit{a})$, $S$ shifted by $\feit{a}$, be defined by $S(a)_{\feit{b}}=S_{\feit{a}+\feit{b}}$. Analogously, for $j\in\nn$ let $S_{i}$ be the $\field{k}$-vector space generated by monomials of degree $i$, and let $S(j)$ be defined by $S(j)_{i}=S_{i+j}$. For the remainder of this section let $N$ be an $\nnn$-graded $S$-module.
\begin{definition}\label{minfree}An \emph{($\nnn$- or $\nn$-)graded minimal free resolution} of $N$ is a left complex
\[\begin{CD}
0@<<<F_{0}@<\phi_1<<F_{1}@<\phi_2<<F_{2}@<<<\cdots@<\phi_l<<F_{l}@<<<0 
\end{CD}\]
with the following properties: \begin{itemize}
\item $F_i=\begin{cases}\bigoplus_{\mathbf{a}\in\nnn}S(-\mathbf{a})^{\Bb{i}{\feit{a}}}, \nnn\text{-graded resolution}\\
\bigoplus_{j\in\nn}S(-j)^{\Bb{i}{j}}, \nn\text{-graded resolution}\\	\end{cases}$
\item $\im\phi_i=\ker\phi_{i-1}$ for all $i\geq2$, and $F_0/\im\phi_1\cong N$ (Exact) 
\item $\im\phi_{i}\subseteq\feit{m}F_{i-1}$ (Minimal) 
\item \begin{flalign*}\phi_{i}\big((F_i)_{\feit{a}}\big)\subseteq&(F_{i-1})_{\feit{a}}\text{ (Degree preserving, $\nnn$-graded case)}\\
 \phi_{i}\big((F_i)_{j}\big)\subseteq&(F_{i-1})_{j} \text{ (Degree preserving, $\nn$-graded case)}.\end{flalign*}\end{itemize}
\end{definition} 

It follow from \cite[Theorem A.2.2]{HH} that the Betti numbers associated to a ($\nn$- or $\nnn$-graded) minimal free resolution are unique, in that any other minimal free resolution must have the same Betti numbers. We may therefore without ambiguity refer to $\{\BbModulk{i}{\feit{a}}{N}\}$ and $\{\BbModulk{i}{j}{N}\}$, respectively, as  the $\nnn$-graded and $\nn$-graded Betti numbers of $N$ (over $\field{k}$). Observe that \[\BbModulk{i}{j}{N}=\sum_{|\feit{a}|=j}\BbModulk{i}{\feit{a}}{N}\] where $|\feit{a}|=\feit{a}(1)+\feit{a}(2)+\cdots+\feit{a}(n)$ (see Remark \ref{remarkSigm}, above). Note also that for an $\nnn$-graded (that is, monomial) ideal $I\subseteq S$, we have $\BbModulk{i}{\sigma}{S/I}=\BbModulk{i-1}{\sigma}{I}$ for all $i\geq 1$, and $\BbModulk{0}{\sigma}{S/I}=\begin{cases}1, \sigma=\emptyset\\ 0, \sigma\neq\emptyset\end{cases}$. 

The $\nn$-graded Betti numbers of $N$ may be compactly presented in a so-called \emph{Betti table}: 
\[\beta[N;\field{k}]=\begin{array}{r|cccc}
& 0 & 1& \cdots & l\\
\hline
j & \BbModulk{0}{j}{N} & \BbModulk{1}{j+1}{N}&\cdots & \BbModulk{l}{j+l}{N}\\
j+1 & \BbModulk{0}{j+1}{N} & \BbModulk{1}{j+2}{N}&\cdots & \BbModulk{l}{j+l+1}{N}\\
\vdots& \vdots & \vdots & \cdots & \vdots\\
k & \BbModulk{0}{k}{N} & \BbModulk{1}{k+1}{N}&\cdots & \BbModulk{l}{k+l}{N}\\
\end{array}\]

By the (graded) \emph{Hilbert Syzygy Theorem} we have $F_i=0$ for all $i\geq n$. If $F_l\neq 0$ but $F_i=0$ for all $i>l$, we refer to $l$ as the \emph{length} of the minimal free resolution. It can be seen from e.g.~\cite[Corollary 1.8]{Eis} that the length of a minimal free resolution of $N$ equals its projective dimension ($\ppd N$). 

A sequence $f_1,\ldots,f_r\in \langle x_1,x_2,\ldots,x_n\rangle$ is said to be a \emph{regular $N$-sequence} if $f_{i+1}$ is not a zero-divisor on $N/(f_1N+\cdots+f_iN)$. 

\begin{definition}
The \emph{depth} of $N$ is the common length of a longest regular $N$-sequence. Whenever $N$ is $\nn$-graded the polynomials may be assumed to be homogeneous.
\end{definition}
In general we have $\depth N\leq \Kdim N$, where $\Kdim N$ denotes the Krull dimension of $N$. The following is a particular case of the famous \emph{Auslander-Buchsbaum Theorem}.
\begin{theorem}[Auslander-Buchsbaum]
\[\ppd N+\depth N=n.\] 
\end{theorem}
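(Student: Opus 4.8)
The plan is to prove this graded form of the Auslander--Buchsbaum identity over $S=\polr{k}{x}{n}$ by induction on $\ppd N$ (for finitely generated $N$), using the homological characterization of depth in terms of $\operatorname{Ext}$ against the residue field $\field{k}=S/\feit{m}$, where $\feit{m}=\langle x_1,\ldots,x_n\rangle$. The two ingredients I would assemble first are: (i) the standard fact (Rees) that $\depth N=\min\{i:\operatorname{Ext}^{i}_{S}(\field{k},N)\neq 0\}$, the minimum being $+\infty$ when every such $\operatorname{Ext}$ vanishes; and (ii) the computation of $\operatorname{Ext}^{i}_{S}(\field{k},S)$ through the Koszul complex $K_{\bullet}$ resolving $\field{k}$ over $S$, whose dual has homology concentrated at the top since $x_1,\ldots,x_n$ is a regular sequence. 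This shows $\operatorname{Ext}^{i}_{S}(\field{k},S)=0$ for $i\neq n$ and $\operatorname{Ext}^{n}_{S}(\field{k},S)\cong\field{k}$ up to a grading shift. Additivity then gives $\operatorname{Ext}^{i}_{S}(\field{k},F)=0$ for $i\neq n$ for every free $F$, so by (i) every free module has depth $n$; in particular $\depth S=n$, the right-hand constant. This also settles the base case $\ppd N=0$: then $N$ is free, $\depth N=n$, and $\ppd N+\depth N=0+n=n$.

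For the inductive step, suppose $\ppd N=p\geq 1$ and let $F_{\bullet}\to N$ be a minimal free resolution. Writing $N_1=\im\phi_1=\ker(F_0\to N)$ for the first syzygy module, the truncated complex $0\to F_p\to\cdots\to F_1\to N_1\to 0$ is again minimal, so $\ppd N_1=p-1$ and, by the inductive hypothesis, $\depth N_1=n-(p-1)=n-p+1$. I would then feed the short exact sequence $0\to N_1\to F_0\to N\to 0$ into the long exact sequence of $\operatorname{Ext}^{\bullet}_{S}(\field{k},-)$ and read off, via (i), that $\depth N=\depth N_1-1=n-p$. When $p\geq 2$ this is immediate: since $\depth F_0=n$ forces $\operatorname{Ext}^{i}_{S}(\field{k},F_0)=0$ for all $i\neq n$, and $\depth N_1=n-p+1\leq n-1$, the relevant segment of the long exact sequence produces an isomorphism $\operatorname{Ext}^{n-p}_{S}(\field{k},N)\cong\operatorname{Ext}^{n-p+1}_{S}(\field{k},N_1)\neq 0$, while all lower $\operatorname{Ext}$ modules of $N$ are sandwiched between vanishing terms and hence vanish.

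The step I expect to be the genuine obstacle is the transition case $p=1$, where $\depth N_1=\depth F_0=n$ and one must nevertheless produce a strict drop to $\depth N=n-1$ rather than $n$. Here the free-module vanishing no longer separates the two relevant cohomological degrees, and the argument hinges on the \emph{minimality} of the resolution. Since by minimality the entries of the matrix of $\phi_1\colon F_1\to F_0$ lie in $\feit{m}$, and since under the identification of each side with a direct sum of copies of $\operatorname{Ext}^{n}_{S}(\field{k},S)\cong\field{k}$ the induced map $\operatorname{Ext}^{n}_{S}(\field{k},F_1)\to\operatorname{Ext}^{n}_{S}(\field{k},F_0)$ is exactly the reduction of that matrix modulo $\feit{m}$, this induced map is zero. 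The long exact sequence then yields $\operatorname{Ext}^{n-1}_{S}(\field{k},N)\cong\operatorname{Ext}^{n}_{S}(\field{k},F_1)\neq 0$ together with $\operatorname{Ext}^{i}_{S}(\field{k},N)=0$ for $i<n-1$, so $\depth N=n-1=n-p$. This closes the induction and establishes $\ppd N+\depth N=n$.
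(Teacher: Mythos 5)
Your proof is correct, but there is nothing in the paper to compare it against: the paper does not prove Auslander--Buchsbaum at all, it simply cites \cite[Corollary A.4.3]{HH}. What you have written is essentially the standard textbook argument (the one underlying the cited reference, and e.g.\ Bruns--Herzog): Rees' characterization $\depth N=\min\{i:\operatorname{Ext}^i_S(\field{k},N)\neq 0\}$, the Koszul computation showing $\operatorname{Ext}^i_S(\field{k},S)$ is concentrated in degree $n$ (so free modules have depth $n$), and induction on $\ppd N$ via the first syzygy, feeding $0\to N_1\to F_0\to N\to 0$ into the long exact sequence of $\operatorname{Ext}^{\bullet}_S(\field{k},-)$. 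You correctly identified and handled the one delicate point, the case $\ppd N=1$: there the vanishing of free terms alone cannot force the depth to drop, and one needs minimality, via the natural isomorphism $\operatorname{Ext}^n_S(\field{k},F)\cong F\otimes_S\field{k}$ for finite free $F$, under which the connecting data reduce the matrix of $\phi_1$ modulo $\feit{m}$, hence to zero, giving $\operatorname{Ext}^{n-1}_S(\field{k},N)\cong\operatorname{Ext}^n_S(\field{k},F_1)\neq 0$. Two small hypotheses you use implicitly and should state: $N$ must be finitely generated and nonzero (otherwise neither side of the identity behaves), and in the graded setting you need the graded analogues of Nakayama's lemma and of Rees' theorem, both of which hold for $\feit{m}=\langle x_1,\ldots,x_n\rangle$ since it is the graded maximal ideal; likewise ``projective implies free'' in the base case is the graded local statement. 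With those caveats your argument is complete and self-contained, which is strictly more than the paper offers for this statement.
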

\begin{proof}
See e.g.~\cite[Corollary A.4.3]{HH}. 
\end{proof}
Note that the Krull dimension $\Kdim\srring{\dd}$ of $\srring{\dd}$ is one more than the dimension of $\dd$ (see \cite[Corollary 6.2.2]{HH}). The simplicial complex $\dd$ is said to be \emph{Cohen-Macaulay} if $\depth\srring{\dd} =\Kdim \srring{\dd}$. That is, if $\srring{\dd}$ is Cohen-Macaulay as an $S$-module.  
   
\begin{definition}
Let $\ino{\dd}{i}$ denote the set of $i$-dimensional faces of $\dd$. That is, \[\ino{\dd}{i}=\{\sigma \in \dd:|\sigma|=i+1\}.\] Let $\field{k}^{\ino{\dd}{i}}$ be the free $\field{k}$-vector space on $\ino{\dd}{i}$. The \emph{(reduced) chain complex} of $M$ over $\field{k}$ is the complex \[\minCDarrowwidth13pt\begin{CD}0
@<<<\field{k}^{\ino{\dd}{-1}}@<\delta_{0}<<\cdots@<<<\field{k}^{\ino{\dd}{i-1}}@<\delta_{i}<<\field{k}^{\ino{\dd}{i}}@<<<\cdots@<\delta_{\dim(\dd)}<<\field{k}^{\ino{\dd}{\dim(\dd)}}@<<<0\end{CD},\] where the boundary maps $\delta_{i}$ are defined as follows: With the natural ordering on $E$, set $\sign(j,\sigma)=(-1)^{r-1}$ if $j$ is the $r$th element of $\sigma\subseteq E$, and let \[\delta_{i}(\sigma)=\sum_{j\in \sigma}\sign(j,\sigma)\;\sigma\smallsetminus j.\] Extending $\delta_{i}$ $\field{k}$-linearly, we obtain a $\field{k}$-linear map from $\field{k}^{\ino{\dd}{i}}$ to $\field{k}^{\ino{\dd}{i-1}}$.
\end{definition}

\begin{definition}
The $i$th \emph{reduced homology} of $\dd$ over $\field{k}$ is the vector space \[\tilde{H}_{i}(\dd;\field{k})=\ker(\delta_{i})/\im(\delta_{i+1}).\]
\end{definition}
The following is one of the most celebrated results in the intersection between algebra and combinatorics.
\begin{theorem}[Hochster's formula]\label{Hochster}\[\Bringk{i}{\sigma}{\dd}=\Bnumk{i-1}{\sigma}{\dd}=\dim_{\field{k}}\tilde{H}_{|\sigma|-i-1}(\dd_{|\sigma};\field{k}).\]
\end{theorem}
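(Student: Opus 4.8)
Since the first equality $\Bringk{i}{\sigma}{\dd}=\Bnumk{i-1}{\sigma}{\dd}$ is precisely the relation between the Betti numbers of $\srring{\dd}$ and those of $\srid{\dd}$ recorded in the preliminaries, the task is to prove the second equality. My plan is to read off the multigraded Betti numbers from $\mathrm{Tor}$,
\[\Bringk{i}{\sigma}{\dd}=\dim_{\field{k}}\mathrm{Tor}^S_i(\srring{\dd},\field{k})_{\sigma},\]
and to compute $\mathrm{Tor}$ by resolving $\field{k}$ with the Koszul complex $K_\bullet$ on the variables $x_1,\ldots,x_n$, which is the minimal free resolution of $\field{k}$ over $S$. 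Then $\mathrm{Tor}^S_i(\srring{\dd},\field{k})$ is the $i$-th homology of $\srring{\dd}\otimes_S K_\bullet$, and since this complex is $\nnn$-graded with multidegree-preserving differentials, its homology may be computed separately in each multidegree $\sigma$.

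The crux is to identify the strand of $\srring{\dd}\otimes_S K_\bullet$ in multidegree $\sigma$ with a reduced simplicial chain complex of the restriction $\dd_{|\sigma}$. Writing $e_T$ for the Koszul basis element indexed by $T\subseteq\{1,\ldots,n\}$, we have $\srring{\dd}\otimes_S K_i=\bigoplus_{|T|=i}\srring{\dd}\,e_T$ with $e_T$ in multidegree $T$ (identifying $T$ with its indicator vector). A basis vector of the multidegree-$\sigma$ component is thus of the form $u\,e_T$ with $u$ a monomial of $\srring{\dd}$; this forces $T\subseteq\sigma$ and $u$ to be the squarefree monomial supported on $\sigma\smallsetminus T$, while nonvanishing of $u$ in $\srring{\dd}$ amounts to $\sigma\smallsetminus T\in\dd$. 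Since $\sigma\smallsetminus T\subseteq\sigma$ automatically, these basis vectors are in bijection with the faces of $\dd_{|\sigma}$ of cardinality $|\sigma|-i$, so in homological degree $i$ the strand is a copy of $\field{k}^{\ino{\dd_{|\sigma}}{|\sigma|-i-1}}$. Inspecting the Koszul differential $\partial(e_T)=\sum_{j\in T}\pm\,x_j\,e_{T\smallsetminus j}$ under this bijection shows that it coincides, up to sign, with the simplicial (co)boundary operator of $\dd_{|\sigma}$.

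Granting this identification, the homology of the multidegree-$\sigma$ strand is a reduced (co)homology of $\dd_{|\sigma}$. Note the direction: as the homological index $i$ decreases, the associated face dimension $|\sigma|-i-1$ increases, so the Koszul differential acts as a coboundary and the strand computes reduced cohomology; but over the field $\field{k}$ the reduced homology and cohomology of $\dd_{|\sigma}$ in degree $|\sigma|-i-1$ have equal dimension, yielding
\[\dim_{\field{k}}\mathrm{Tor}^S_i(\srring{\dd},\field{k})_{\sigma}=\dim_{\field{k}}\tilde{H}_{|\sigma|-i-1}(\dd_{|\sigma};\field{k}),\]
as desired. The step I expect to demand the most care is exactly this bookkeeping: reconciling the homological degree $i$ with the topological degree $|\sigma|-i-1$, and verifying that the signs built into the Koszul differential match those of $\delta$ encoded by $\sign(j,\sigma)$, since a single sign or index error is precisely what would replace the correct reduced (co)homology by a spurious group.
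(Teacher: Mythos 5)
Your argument is correct, but be aware that the paper does not actually prove Theorem \ref{Hochster}: its ``proof'' is just the citation to \cite[Corollary 5.12]{MS} and \cite[p.~81]{HH}. What you have written is essentially the standard argument underlying those references (Miller--Sturmfels identify the multidegree-$\sigma$ strand of the Koszul complex with a reduced chain complex of an ``upper Koszul'' simplicial complex, which for squarefree $\sigma$ amounts to your identification with the cochain complex of $\dd_{|\sigma}$), so you have supplied a self-contained proof where the paper defers entirely to the literature. Two points would tighten it. First, $\Bringk{i}{\sigma}{\dd}=\dim_{\field{k}}\mathrm{Tor}^S_i(\srring{\dd},\field{k})_{\sigma}$ uses both minimality (tensoring a minimal resolution of $\srring{\dd}$ with $\field{k}$ kills the differentials) and the symmetry of Tor, which is what licenses switching to the Koszul resolution of $\field{k}$; state this. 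Second, the sign bookkeeping you flag is genuinely resolvable by a diagonal rescaling: under your bijection $\tau\mapsto \feit{x}^{\tau}e_{\sigma\smallsetminus\tau}$ for $\tau\in\dd_{|\sigma}$, writing $\sign(j,S)=(-1)^{\nu(j,S)}$ with $\nu(j,S)=\#\{s\in S:s<j\}$, one checks for $j\in\sigma\smallsetminus\tau$ that
\[\sign(j,\sigma\smallsetminus\tau)\,\sign(j,\tau\cup j)=(-1)^{\nu(j,\sigma)},\]
so multiplying the basis vector of $\tau$ by $\epsilon(\tau)=(-1)^{\sum_{t\in\tau}\nu(t,\sigma)}$ makes the strand literally isomorphic to the reduced cochain complex of $\dd_{|\sigma}$, after which $\dim_{\field{k}}\tilde{H}^{|\sigma|-i-1}=\dim_{\field{k}}\tilde{H}_{|\sigma|-i-1}$ over the field $\field{k}$ finishes exactly as you say; the first equality $\Bringk{i}{\sigma}{\dd}=\Bnumk{i-1}{\sigma}{\dd}$ is, as you note, the shift recorded in the preliminaries for $i\geq 1$.
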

\begin{proof}
See \cite[Corollary 5.12]{MS} and \cite[p.~81]{HH}.
\end{proof}

\section{Betti numbers of $i$-skeletons}\label{SecSimp}
\noindent Let $\dd$ be a $d$-dimensional simplicial complex on $\{1,\ldots,n\}$, and let $\field{k}$ be a field. In this section we shall demonstrate how each of the Betti numbers of $\srring{\iskel{\dd}{d-1}}$ can be expressed as a $\mathbb{Z}$-linear combination of the Betti numbers of $\srring{\dd}$. 

\subsection{The first rows of the Betti table}
\begin{lemma}\label{ansikter}\[\tilde{H}_{i}(\dd_{|\sigma};\field{k})=\tilde{H}_{i}(\trures{\dd}{d-1}{\sigma};\field{k})\] for all $0\leq i\leq d-2$.
\end{lemma}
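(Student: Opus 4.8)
The plan is to compare the two reduced chain complexes directly, exploiting the fact that they are literally identical in low homological degrees and can differ only at the very top.

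First I would observe that the only faces of $\dd$ discarded when passing to the skeleton $\iskel{\dd}{d-1}$ are the $d$-dimensional ones: in every dimension $i\leq d-1$ the complexes $\dd$ and $\iskel{\dd}{d-1}$ have exactly the same faces. Restricting to $\sigma$ preserves this, since restriction is defined facewise. Thus $\ino{\dd_{|\sigma}}{i}=\ino{\trures{\dd}{d-1}{\sigma}}{i}$ for every $i\leq d-1$, while $\ino{\trures{\dd}{d-1}{\sigma}}{d}=\emptyset$. (If $\sigma$ contains no $d$-dimensional face of $\dd$, the two restrictions coincide entirely and there is nothing to prove.)

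Next I would note that the boundary map $\delta_i$ is given by a formula depending only on the faces themselves, so on these common chain groups the boundary maps of the two complexes agree. Consequently the reduced chain complex of $\trures{\dd}{d-1}{\sigma}$ is obtained from that of $\dd_{|\sigma}$ by replacing the single top term $\field{k}^{\ino{\dd_{|\sigma}}{d}}$ with $0$, while every term in homological degree $\leq d-1$ and every map among those terms is left unchanged.

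Finally, since $\tilde{H}_{i}=\ker(\delta_i)/\im(\delta_{i+1})$ involves only the chain groups in homological degrees $i-1$, $i$, $i+1$ and the maps between them, it is unaffected by this change precisely when $i+1\leq d-1$, that is, when $i\leq d-2$; for such $i$ the two complexes agree in all three relevant degrees, giving $\tilde{H}_{i}(\dd_{|\sigma};\field{k})=\tilde{H}_{i}(\trures{\dd}{d-1}{\sigma};\field{k})$. There is no genuine obstacle here; the only point requiring care is to pin down that it is exactly the degree-$d$ term that is removed, and to confirm that this removal cannot affect homology in degrees $i\leq d-2$ because those computations never reach the degree-$d$ chain group.
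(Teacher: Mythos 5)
Your proposal is correct and follows exactly the paper's argument: both proofs observe that $\ino{\dd_{|\sigma}}{i}=\ino{\trures{\dd}{d-1}{\sigma}}{i}$ for all $i\leq d-1$, so the two reduced chain complexes coincide except in homological degree $d$, which cannot affect $\tilde{H}_{i}$ for $i\leq d-2$. You simply spell out the bookkeeping (agreement of boundary maps, and that $\tilde{H}_{i}$ only involves chain groups in degrees $i-1$, $i$, $i+1$) that the paper leaves implicit in ``the result follows.''
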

\begin{proof}
By the definition of a skeleton we have $\ino{\dd_{|\sigma}}{i}=\ino{\trures{\dd}{d-1}{\sigma}}{i}$ and thus also $\field{k}^{\ino{\dd_{|\sigma}}{i}}=\field{k}^{\ino{\trures{\dd}{d-1}{\sigma}}{i}}$, for all $-1\leq i\leq d-1$. In other words, the reduced chain complexes of $\dd_{|\sigma}$ and ${\iskel{\dd}{d-1}}_{|\sigma}$ are identical except for in homological degree $d$. The result follows.
\end{proof}

\begin{proposition}\label{likebortsettfrasiste}
For all $i$ and $j \leq d+i-1$ we have \[\Bringk{i}{j}{\dd}=\Bringk{i}{j}{\iskel{\dd}{d-1}}.\] 
\end{proposition}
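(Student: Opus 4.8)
The plan is to reduce the statement about graded Betti numbers to the homological computation already provided by Hochster's formula (Theorem~\ref{Hochster}) and the comparison of reduced homologies established in Lemma~\ref{ansikter}. Recall that the $\nn$-graded Betti number $\Bringk{i}{j}{\dd}$ decomposes as $\sum_{|\sigma|=j}\Bringk{i}{\sigma}{\dd}$, so it suffices to prove the finer $\nnn$-graded equality $\Bringk{i}{\sigma}{\dd}=\Bringk{i}{\sigma}{\iskel{\dd}{d-1}}$ for every squarefree $\sigma\subseteq E$ with $|\sigma|=j\leq d+i-1$, and then sum over all such $\sigma$. (One should note that for non-squarefree multidegrees both sides vanish, since these are Stanley--Reisner rings, so nothing is lost by restricting to subsets.)

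First I would apply Hochster's formula to both sides, which turns the claim into the assertion
\[
\dim_{\field{k}}\tilde{H}_{|\sigma|-i-1}(\dd_{|\sigma};\field{k})=\dim_{\field{k}}\tilde{H}_{|\sigma|-i-1}({\iskel{\dd}{d-1}}_{|\sigma};\field{k}).
\]
Here the key observation is that restriction and skeletonization interact cleanly: one checks that ${\iskel{\dd}{d-1}}_{|\sigma}=\iskel{(\dd_{|\sigma})}{d-1}$, i.e.\ restricting the $(d-1)$-skeleton to $\sigma$ gives the same complex as taking the $(d-1)$-skeleton of the restriction. Thus the two homology groups above are exactly the homologies compared in Lemma~\ref{ansikter}, and that lemma guarantees they agree \emph{provided} the relevant homological index $h:=|\sigma|-i-1$ lies in the range $0\leq h\leq d-2$ where the two reduced chain complexes coincide.

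The heart of the argument — and the step I expect to require the most care — is the index bookkeeping that shows the degree hypothesis $j=|\sigma|\leq d+i-1$ forces $h=|\sigma|-i-1$ into the admissible range of Lemma~\ref{ansikter}. The upper bound is immediate: $|\sigma|\leq d+i-1$ gives $h=|\sigma|-i-1\leq d-2$, which is precisely the ceiling in the lemma. The lower bound $h\geq 0$, however, is not forced by the hypothesis alone, so I would handle the low-degree cases separately. When $h<0$ both reduced homology groups are automatically zero (the reduced chain complex is supported in homological degrees $\geq -1$, and $\tilde H_{-1}$ is nonzero only for $\sigma=\emptyset$, which is handled directly), so the two Betti numbers coincide trivially there as well. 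The only genuinely substantive range is therefore $0\leq h\leq d-2$, which is exactly where Lemma~\ref{ansikter} applies verbatim.

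Putting these pieces together, for each admissible $\sigma$ the $\nnn$-graded Betti numbers of $\dd$ and of $\iskel{\dd}{d-1}$ agree; summing over all $\sigma$ with $|\sigma|=j$ yields the claimed equality of $\nn$-graded Betti numbers $\Bringk{i}{j}{\dd}=\Bringk{i}{j}{\iskel{\dd}{d-1}}$ for all $i$ and all $j\leq d+i-1$. The entire proof is thus a two-line deduction once Hochster's formula and Lemma~\ref{ansikter} are in hand; essentially all the real content has been front-loaded into the lemma, and the proposition is the bridge that converts the chain-level statement into the statement about Betti tables — concretely, it says the portion of the Betti table of $\iskel{\dd}{d-1}$ lying on or above the antidiagonal indexed by $j-i=d-1$ is identical to that of $\dd$.
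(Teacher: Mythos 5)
Your proof is correct and follows essentially the same route as the paper: decompose the $\nn$-graded Betti number as $\sum_{|\sigma|=j}\Bringk{i}{\sigma}{\dd}$, apply Hochster's formula (Theorem \ref{Hochster}), invoke Lemma \ref{ansikter} since $j\leq d+i-1$ forces $|\sigma|-i-1\leq d-2$, and sum back up. Your separate treatment of the low homological degrees $|\sigma|-i-1<0$ is if anything slightly more careful than the paper, which cites Lemma \ref{ansikter} (stated only for $0\leq i\leq d-2$) without comment; the lemma's proof shows the two restricted chain complexes agree in degrees $-1$ through $d-1$, so homology in degree $-1$ agrees too (note only that $\tilde{H}_{-1}(\dd_{|\sigma};\field{k})$ can be nonzero for nonempty $\sigma$ containing no vertex of $\dd$, not just for $\sigma=\emptyset$ as you state, but the two restrictions coincide there as well, so your conclusion stands).
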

\begin{proof}
If $j \leq d+i-1$ then $j-i-1\leq d-2$. By Theorem \ref{Hochster} and Lemma \ref{ansikter} then, we have \begin{flalign*}\Bringk{i}{j}{\dd}&=\sum_{|\sigma|=j}\Bringk{i}{\sigma}{\dd}\\
&=\sum_{|\sigma|=j}\dim_{\field{k}}\tilde{H}_{|\sigma|-i-1}(\dd_{|\sigma};\field{k})\\
&=\sum_{|\sigma|=j}\dim_{\field{k}}\tilde{H}_{|\sigma|-i-1}(\trures{\dd}{d-1}{\sigma};\field{k})\\
&=\sum_{|\sigma|=j}\Bringk{i}{\sigma}{\tru{\dd}{d-1}}\\&=\Bringk{i}{j}{\iskel{\dd}{d-1}}.\end{flalign*}
\end{proof}

\subsection{The final row of the Betti table}
The Hilbert series of $S/\srid{\dd}$ over $\field{k}$ is $\hilb{S/\srid{\dd}}=\sum_{i\in\mathbb{Z}}\dim_{\field{k}}(S/\srid{\dd})_{i}\;t^{i}$. Let $\ff{\dd}{i}=|\ino{\dd}{i}|$. By \cite[Section 6.1.3, Equation (6.3)]{HH} we have \[\hilb{S/\srid{\dd}}=\frac{\sum_{i=0}^{n}(-1)^i\sum_{j}\BbModulk{i}{j}{S/\srid{\dd}}}{(1-t)^n}.\]
On the other hand, we see from \cite[Proposition 6.2.1]{HH} that \[\hilb{S/\srid{\dd}}=\frac{\sum_{i=0}^{d+1}\ff{\dd}{i-1}t^i(1-t)^{d+1-i}}{(1-t)^{d+1}}.\] Combined, these two equations imply
\begin{equation}\label{A}
\sum_{i=0}^{d+1}\ff{\dd}{i-1}t^{i}(1-t)^{n-i}=\sum_{i=0}^{n}(-1)^{i}\sum_{j}\Bringk{i}{j}{\dd}t^{j}, 
\end{equation}
and
\begin{equation}\label{B}
\sum_{i=0}^{d}\ff{\tru{\dd}{d-1}}{i-1}t^{i}(1-t)^{n-i}=\sum_{i=0}^{n}(-1)^{i}\sum_{j}\Bringk{i}{j}{\tru{\dd}{d-1}}t^{j}. 
\end{equation}

\begin{remark}From here on we shall employ the convention that $i!=0$ for $i<0$, and that $\binom{j}{k}=0$ if one or both of $j$ and $k$ is negative.\end{remark} 

Differentiating both sides of equation (\ref{A}) $n-d-1$ times, we get \begin{flalign*}&\sum_{i=0}^{d+1}\ff{\dd}{i-1}\sum_{l=0}^{n-d-1}(-1)^{l}\binom{n-d-1}{l}\frac{i!(n-i)!}{(i-n+d+1+l)!(n-i-l)!}t^{i-n+d+1+l}(1-t)^{n-i-l}\\&=\sum_{i=0}^{n}(-1)^{i}\sum_{j}\Bringk{i}{j}{\dd}\frac{j!}{(j-(n-d-1))!}t^{j-n+d+1}. \end{flalign*} When evaluated at $t=1$, the left side of the above equation is $0$ except when $i=d+1$ and $l=n-d-1$. Thus, we have \[(-1)^{n-d-1}(n-d-1)!\ff{\dd}{d}=\sum_{i=0}^{n}(-1)^{i}\sum_{j\geq n-d-1}\Bringk{i}{j}{\dd}\frac{j!}{(j-(n-d-1))!},\] and \[\ff{\dd}{d}=\sum_{i=0}^{n}(-1)^{n+d+i+1}\sum_{j\geq n-d-1}\binom{j}{n-d-1}\Bringk{i}{j}{\dd}.\] 
\begin{lemma}\label{nedenfor}
For all $i$ and $j\geq d+i+2$ we have \[\Bringk{i}{j}{\dd}=0.\] 
\end{lemma}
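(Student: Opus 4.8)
The plan is to reduce the statement to the vanishing of reduced simplicial homology via Hochster's formula (Theorem \ref{Hochster}), and then to observe that the homological degree appearing there exceeds the dimension of the restricted complex. Since $\Bringk{i}{j}{\dd}=\sum_{|\sigma|=j}\Bringk{i}{\sigma}{\dd}$, it suffices to prove that each $\nnn$-graded summand $\Bringk{i}{\sigma}{\dd}$ vanishes when $|\sigma|=j\geq d+i+2$.

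First I would apply Theorem \ref{Hochster} to each $\sigma$ with $|\sigma|=j$, obtaining $\Bringk{i}{\sigma}{\dd}=\dim_{\field{k}}\tilde{H}_{|\sigma|-i-1}(\dd_{|\sigma};\field{k})=\dim_{\field{k}}\tilde{H}_{j-i-1}(\dd_{|\sigma};\field{k})$. The problem is thus reduced to showing that these reduced homology groups are zero under the hypothesis $j\geq d+i+2$. The crucial point is a dimension count: $\dd_{|\sigma}$ is a subcomplex of $\dd$, so $\dim(\dd_{|\sigma})\leq\dim(\dd)=d$. The reduced chain complex of any simplicial complex $\Gamma$ has $\field{k}^{\ino{\Gamma}{m}}=0$ for every $m>\dim(\Gamma)$, and consequently $\tilde{H}_{m}(\Gamma;\field{k})=0$ for all $m>\dim(\Gamma)$.

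I would then combine these two facts. With $\Gamma=\dd_{|\sigma}$ and $m=j-i-1$, the hypothesis $j\geq d+i+2$ yields $j-i-1\geq d+1>d\geq\dim(\dd_{|\sigma})$, so $\tilde{H}_{j-i-1}(\dd_{|\sigma};\field{k})=0$, and hence $\Bringk{i}{\sigma}{\dd}=0$. Summing over all $\sigma$ of cardinality $j$ gives $\Bringk{i}{j}{\dd}=0$, as claimed.

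Honestly, the proof is short and there is no serious obstacle; the only thing requiring care is the index bookkeeping. I would double-check that the threshold $j\geq d+i+2$ corresponds exactly to the homological degree $j-i-1$ strictly exceeding $d$ (so that the vanishing is sharp and not off by one), and that the degree in Hochster's formula is indeed $|\sigma|-i-1$ rather than a differently shifted index. I would also verify the elementary fact that restricting $\dd$ to a subset of $E$ cannot increase the dimension, which is what guarantees $\dim(\dd_{|\sigma})\leq d$.
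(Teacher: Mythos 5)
Your proof is correct and is essentially identical to the paper's own argument: both reduce $\Bringk{i}{j}{\dd}$ to the $\nnn$-graded pieces, apply Hochster's formula, and note that $|\sigma|-i-1\geq d+1>\dim(\dd_{|\sigma})$ forces the reduced homology to vanish. Your extra remarks on $\dim(\dd_{|\sigma})\leq d$ and on the vanishing of the chain groups above the dimension simply make explicit what the paper leaves implicit.
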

\begin{proof}
If $|\sigma|\geq d+i+2$, then $|\sigma|-i-1\geq \dim(\dd)+1$, which implies \[\dim_{\field{k}}\tilde{H}_{|\sigma|-i-1}(\dd_{|\sigma};\field{k})=0.\] So by Hochster's formula we have that if $j\geq d+i+2$ then 
\[\Bringk{i}{j}{\dd}=\sum_{|\sigma|=j}\Bringk{i}{\sigma}{\dd}=\sum_{|\sigma|=j}\dim_{\field{k}}\tilde{H}_{|\sigma|-i-1}(\dd_{|\sigma};\field{k})=0.\]
\end{proof}
According to Proposition \ref{likebortsettfrasiste} and Lemma \ref{nedenfor}, and because $\ff{\dd}{i}=\ff{\iskel{\dd}{d-1}}{i}$  for all $i\neq d$, subtracting equation (\ref{B}) from equation (\ref{A}) yields \begin{flalign*}\label{pent}\ff{\dd}{d}t^{d+1}(1-t)^{n-d-1}=&\sum_{i=0}^{n}(-1)^{i}\big(\Bringk{i}{d+i}{\dd}-\Bringk{i}{d+i}{\iskel{\dd}{d-1}}\big)t^{d+i}\\&+\sum_{i=0}^{n}(-1)^{i}\Bringk{i}{d+i+1}{\dd}t^{d+i+1}.\end{flalign*} Let $1\leq u\leq n$. Differentiating both sides of the above equation $d+u$ times yields \begin{flalign*}&\ff{\dd}{d}\sum_{l=0}^{d+u}(-1)^{l}\binom{d+u}{l}\frac{(d+1)!(n-d-1)!}{(l-u+1)!(n-d-1-l)!}t^{l-u+1}(1-t)^{n-d-1-l}\\=&\sum_{i=u}^{n}(-1)^{i}\big(\Bringk{i}{d+i}{\dd}-\Bringk{i}{d+i+1}{\iskel{\dd}{d-1}}\big)\frac{(d+i)!}{(i-u)!} t^{i-u}\\&+\sum_{i=u-1}^{n}(-1)^{i}\Bringk{i}{d+i+1}{\dd}\frac{(d+i+1)!}{(i-u+1)!}t^{i-u+1}.\end{flalign*} Evaluating at $t=0$, we get \begin{flalign*}&\delta^{\prime}*\left((-1)^{u-1}\ff{\dd}{d}\frac{(d+u)!(n-d-1)!}{(u-1)!(n-d-u)!}\right)\\=&(-1)^{u}\big(\Bringk{u}{
d+u}{\dd}-\Bringk{u}{d+u}{\iskel{\dd}{d-1}}\big)(d+u)!\\&+(-1)^{u-1}\Bringk{u-1}{d+u}{\dd}(d+u)!,\end{flalign*} where \[\delta^{\prime}=\begin{cases}
1,& 1\leq u\leq n-d\\
0,& u> n-d
\end{cases}
.\]

Summarizing the above:
\begin{proposition}\label{main}
For $1\leq u\leq n$, we have \[\Bringk{u}{d+u}{\tru{\dd}{d-1}}=\Bringk{u}{d+u}{\dd}-\Bringk{u-1}{d+u}{\dd}+\binom{n-d-1}{u-1}\delta,\] where \[\delta=\begin{cases}\ff{\dd}{d}=\sum_{i=0}^{n}(-1)^{n+d+i+1}\sum_{j\geq n-d-1}\binom{j}{n-d-1}\Bringk{i}{j}{\dd},& 1\leq u\leq n-d\\0,& u> n-d.\end{cases}\]
\end{proposition}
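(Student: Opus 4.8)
The plan is to read the claimed formula off as a single coefficient comparison in the polynomial identity produced by the Hilbert-series computation, having first pinned down exactly which Betti numbers survive in a difference of the two Betti tables. Since $\iskel{\dd}{d-1}=\tru{\dd}{d-1}$, I work throughout with the skeleton. Proposition \ref{likebortsettfrasiste} already gives $\Bringk{i}{j}{\dd}=\Bringk{i}{j}{\iskel{\dd}{d-1}}$ for all $j\leq d+i-1$, so these rows contribute nothing to the difference. At the other end, Lemma \ref{nedenfor} gives $\Bringk{i}{j}{\dd}=0$ for $j\geq d+i+2$; applying the same vanishing argument to the $(d-1)$-dimensional complex $\iskel{\dd}{d-1}$ (Lemma \ref{nedenfor} with $d$ replaced by $d-1$) gives $\Bringk{i}{j}{\iskel{\dd}{d-1}}=0$ already for $j\geq d+i+1$. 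Hence the only diagonals on which the two tables can genuinely differ are $j=d+i$ and $j=d+i+1$, and on the latter the skeleton term vanishes. Invoking these facts together with $\ff{\dd}{i}=\ff{\iskel{\dd}{d-1}}{i}$ for $i\neq d$ collapses the difference of the Hilbert-series identities (\ref{A}) and (\ref{B}) into
\begin{align*}
\ff{\dd}{d}t^{d+1}(1-t)^{n-d-1}&=\sum_{i}(-1)^{i}\big(\Bringk{i}{d+i}{\dd}-\Bringk{i}{d+i}{\iskel{\dd}{d-1}}\big)t^{d+i}\\
&\quad+\sum_{i}(-1)^{i}\Bringk{i}{d+i+1}{\dd}t^{d+i+1}.
\end{align*}

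Both sides are polynomials in $t$, so I simply extract the coefficient of $t^{d+u}$ for each fixed $1\leq u\leq n$; mechanically this is the $(d+u)$-fold derivative evaluated at $t=0$, divided by $(d+u)!$. On the right, the first sum contributes only its $i=u$ term and the second only its $i=u-1$ term, giving $(-1)^{u}\big(\Bringk{u}{d+u}{\dd}-\Bringk{u}{d+u}{\iskel{\dd}{d-1}}\big)+(-1)^{u-1}\Bringk{u-1}{d+u}{\dd}$. On the left I need the coefficient of $t^{u-1}$ in $(1-t)^{n-d-1}$, namely $(-1)^{u-1}\binom{n-d-1}{u-1}$, weighted by $\ff{\dd}{d}$. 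Solving the resulting linear equation for $\Bringk{u}{d+u}{\iskel{\dd}{d-1}}$ and cancelling the common factor $(-1)^{u}$ produces the stated formula, with the binomial $\binom{n-d-1}{u-1}$ generating the case split automatically: under the convention $\binom{j}{k}=0$ for negative entries it vanishes exactly when $u-1>n-d-1$, i.e.\ $u>n-d$, which is precisely the regime where $\delta=0$. For the remaining closed form I would quote the identity $\ff{\dd}{d}=\sum_{i}(-1)^{n+d+i+1}\sum_{j\geq n-d-1}\binom{j}{n-d-1}\Bringk{i}{j}{\dd}$ already obtained by differentiating (\ref{A}) exactly $n-d-1$ times and evaluating at $t=1$.

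I expect the only genuine obstacle to be the sign and coefficient bookkeeping: tracking $(-1)^{u}$ against $(-1)^{u-1}$ across the two sums and on the left-hand binomial, and checking that after dividing by $(-1)^{u}$ the $\ff{\dd}{d}$ term acquires a plus sign while $\Bringk{u-1}{d+u}{\dd}$ acquires a minus sign. A secondary point I would make explicit is the legitimacy of the coefficient extraction: (\ref{A}) and (\ref{B}) are equalities of rational functions with common denominator $(1-t)^{n}$, and clearing this denominator turns the collapsed identity above into an honest polynomial identity, so comparing coefficients of $t^{d+u}$ (equivalently, differentiating $d+u$ times and setting $t=0$) is valid for every $u$ in the stated range. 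No further input is required; the proposition is exactly the $t^{d+u}$-coefficient of an identity already assembled from Proposition \ref{likebortsettfrasiste}, Lemma \ref{nedenfor}, and the $f$-vector formula.
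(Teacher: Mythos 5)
Your proposal is correct and follows essentially the same route as the paper: subtracting (\ref{B}) from (\ref{A}), collapsing the difference via Proposition \ref{likebortsettfrasiste}, Lemma \ref{nedenfor} (applied also to the $(d-1)$-dimensional skeleton), and the equality of $f$-vectors below dimension $d$, then reading off the coefficient of $t^{d+u}$ --- which is precisely the paper's $(d+u)$-fold differentiation evaluated at $t=0$, with the earlier $(n-d-1)$-fold differentiation of (\ref{A}) at $t=1$ supplying the closed form for $\ff{\dd}{d}$. Your added remarks (the skeleton's vanishing on the diagonal $j=d+i+1$, and the legitimacy of coefficient extraction after clearing the denominator $(1-t)^n$) merely make explicit what the paper leaves implicit.
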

Bringing together Propositions \ref{likebortsettfrasiste} and \ref{main}, we get
\begin{theorem}\label{mainThe}
For all $i\geq1$, we have
\[\Bringk{i}{j}{\tru{\dd}{d-1}}=\begin{cases}
\Bringk{i}{j}{\dd}, & \text{$j \leq d+i-1$} \\ 
\Bringk{i}{d+i}{\dd}-\Bringk{i-1}{d+i}{\dd}+\binom{n-d-1}{i-1}\delta, & \text{$j=d+i$,} \\
0, & \text{$j \geq d+i-1$}
\end{cases}\] where \[\delta=\begin{cases}\ff{\dd}{d}=\sum_{k=0}^{n}(-1)^{n+d+k+1}\sum_{l\geq n-d-1}\binom{l}{n-d-1}\Bringk{k}{l}{\dd},& 1\leq i\leq n-d\\0,& i> n-d.\end{cases}\]
\end{theorem}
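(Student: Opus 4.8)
The plan is to read the statement as a case analysis on the position of $j$ relative to $d+i$ and to observe that, once $i\geq 1$ is fixed, the three ranges $j\leq d+i-1$, $j=d+i$, and $j\geq d+i+1$ partition all degrees $j$. Two of these three ranges have already been settled: the range $j\leq d+i-1$ is exactly the content of Proposition \ref{likebortsettfrasiste}, and the single degree $j=d+i$ is exactly Proposition \ref{main} (taking $u=i$). Since $\tru{\dd}{d-1}$ and $\iskel{\dd}{d-1}$ denote the same complex, no translation between the two notations is needed. Thus the only genuinely new ingredient is the vanishing in the top range $j\geq d+i+1$, and the rest of the argument is bookkeeping.

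For the vanishing range I would apply Lemma \ref{nedenfor} not to $\dd$ but to the skeleton $\iskel{\dd}{d-1}$ itself. The key point is that $\dim\iskel{\dd}{d-1}=d-1$, so Lemma \ref{nedenfor}, with $d$ replaced by $d-1$, gives $\Bringk{i}{j}{\iskel{\dd}{d-1}}=0$ whenever $j\geq(d-1)+i+2=d+i+1$. Equivalently, one may invoke Hochster's formula (Theorem \ref{Hochster}) directly: for $|\sigma|=j\geq d+i+1$ one has $|\sigma|-i-1\geq d>\dim\iskel{\dd}{d-1}$, so every reduced homology group $\tilde{H}_{|\sigma|-i-1}({\iskel{\dd}{d-1}}_{|\sigma};\field{k})$ appearing in the sum vanishes. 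This is precisely the third case.

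Assembling the three ranges then yields the displayed formula, with the correction term $\binom{n-d-1}{i-1}\delta$ and the two-line description of $\delta$ carried over verbatim from Proposition \ref{main} (renaming the summation index from $i$ to $k$ to avoid a clash with the outer index). I would also record that the two descriptions of $\delta$ are automatically compatible at the boundary: the binomial coefficient $\binom{n-d-1}{i-1}$ already vanishes once $i>n-d$, so the explicit $\delta=0$ clause for $i>n-d$ is redundant and the formula is in fact uniform across all $i\geq 1$.

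The genuine difficulty of the theorem lies entirely inside Proposition \ref{main} — the Hilbert-series identity, the repeated differentiation, and the extraction of $\ff{\dd}{d}$ as an alternating sum of Betti numbers that produces $\delta$. With that proposition in hand, the main (and minor) point of care here is purely organizational: confirming that the third case bound should be read as $j\geq d+i+1$ rather than the printed $j\geq d+i-1$, so that the three ranges neither overlap nor leave a gap, and checking that the index range $1\leq u\leq n$ of Proposition \ref{main}, together with the Hilbert Syzygy Theorem, accounts for every $i\geq 1$.
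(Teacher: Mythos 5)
Your proof is correct and takes essentially the same route as the paper, which obtains the theorem simply by combining Propositions \ref{likebortsettfrasiste} and \ref{main}, with the vanishing range supplied by Lemma \ref{nedenfor} applied to the $(d-1)$-dimensional complex $\iskel{\dd}{d-1}$ exactly as you do. You are also right on both editorial points: the third case is a misprint for $j\geq d+i+1$, and since $\binom{n-d-1}{i-1}=0$ whenever $i>n-d$, the explicit $\delta=0$ clause is redundant.
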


\begin{example}\label{exTriang}
Let $T$ be one of the two irreducible triangulations of the real projective plane (see \cite{Bar}) -- namely the one corresponding to an embedding of the complete graph on $6$ vertices. Clearly then, we have $n=6$ and $d=2$. The Betti table of $\srring{T}$ over $\mathbb{F}_{3}$ is \[\beta[\srring{T};\mathbb{F}_3]=\begin{array}{r|cccc}
 & 0 & 1 & 2& 3 \\
\hline
0 & 1 & 0 & 0 & 0\\
1 & 0 & 0 & 0 & 0\\
2 & 0 & 0 & 0 & 0\\
3 & 0 & 10 & 15 & 6\\
\end{array}.\]
In this case \[\ff{\dd}{d}=\binom{4}{3}\BringKropp{1}{4}{T}{\mathbb{F}_3}-\binom{5}{3}\BringKropp{2}{5}{T}{\mathbb{F}_3}+\binom{6}{3}\BringKropp{3}{6}{T}{\mathbb{F}_3}=10.\] By Theorem \ref{mainThe}, the Betti numbers of $\srring{\iskel{T}{1}}$ are \begin{flalign*}
\BringKropp{1}{4}{\iskel{T}{1}}{\mathbb{F}_3}=&\BringKropp{1}{4}{T}{\mathbb{F}_3}+\binom{3}{0}\delta=10+10.\\
\BringKropp{2}{5}{\iskel{T}{1}}{\mathbb{F}_3}=&\BringKropp{2}{5}{T}{\mathbb{F}_3}-\BringKropp{1}{5}{T}{\mathbb{F}_3}+\binom{3}{1}\delta=15+30.\\                                                                                                                                                                                                  
\BringKropp{3}{6}{\iskel{T}{1}}{\mathbb{F}_3}=&\BringKropp{3}{6}{T}{\mathbb{F}_3}-\BringKropp{2}{6}{T}{\mathbb{F}_3}+\binom{3}{2}\delta=6-0+30.\\ 
\BringKropp{4}{7}{\iskel{T}{1}}{\mathbb{F}_3}=&\BringKropp{4}{7}{T}{\mathbb{F}_3}-\BringKropp{3}{7}{T}{\mathbb{F}_3}+\binom{3}{3}\delta=0-0+10.\\
\end{flalign*}
\[\beta[\srring{\iskel{T}{1}};\mathbb{F}_3]=\begin{array}{r|ccccc}
& 0 & 1 & 2 & 3 & 4 \\
\hline
0 & 1 & 0 & 0 & 0 & 0 \\
1 & 0 & 0 & 0 & 0 & 0 \\
2 & 0 & 0 & 0 & 0 & 0 \\
3 & 0 & 20 & 45 & 36 & 10\\
\end{array}.\]
\end{example}
\begin{remark}
Observe that as \[\beta[\srring{T};\mathbb{F}_2]=\begin{array}{r|ccccc}
& 0 & 1 & 2 & 3 & 4\\
\hline
0 & 1 & 0 & 0 & 0 & 0\\
1 & 0 & 0 & 0 & 0 & 0\\
2 & 0 & 0 & 0 & 0 & 0\\
3 & 0 & 10 & 15 & 6 & 1\\
4 & 0 & 0 & 0 & 1 & 0\\
\end{array},\] the simplicial complex $T$ of Example \ref{exTriang} is an example of a pure simplicial complex whose Betti numbers depend upon the field $\field{k}$ -- as opposed to what is the case for matroids. 
\end{remark}

\subsection{The projective dimension of skeletons}
Let $\ppd\srring{\dd}$ denote the projective dimension of $\srring{\dd}$. By Auslander-Buchsbaum Theorem we have \begin{flalign*}\ppd{\srring{\dd}}=&n-\depth \srring{\dd}\\
\geq&n-\Kdim \srring{\dd}\\=& n-(d+1), \end{flalign*} so $n-d-1\leq \ppd{\srring{\dd}}\leq n$. 

As for the skeletons, we have
\begin{corollary}\label{pdleq}
\[\ppd{\srring{\iskel{\dd}{d-1}}}\leq 1+\ppd{\srring{\dd}}.\]
\end{corollary}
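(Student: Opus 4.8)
The plan is to read the projective dimension off the Betti numbers. Recall from the discussion following Definition \ref{minfree} that the length of a minimal free resolution of $N$ equals $\ppd N$, so that
\[\ppd N = \max\{i : \BbModulk{i}{j}{N} \neq 0 \text{ for some } j\}.\]
Writing $p = \ppd{\srring{\dd}}$, it therefore suffices to show that $\Bringk{i}{j}{\iskel{\dd}{d-1}} = 0$ for every $i > p+1$ and every $j$. This forces $F_i = 0$ for all $i > p+1$ in the minimal free resolution of $\srring{\iskel{\dd}{d-1}}$, whence its length, and thus its projective dimension, is at most $p+1$.

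So first I would fix $i \geq p+2$ and run through the three cases of Theorem \ref{mainThe}. When $j \leq d+i-1$ the theorem gives $\Bringk{i}{j}{\iskel{\dd}{d-1}} = \Bringk{i}{j}{\dd}$, which vanishes because $i > p$; the case $j \geq d+i+1$ is immediately zero by the theorem.

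The only case requiring care is $j = d+i$, where
\[\Bringk{i}{d+i}{\iskel{\dd}{d-1}} = \Bringk{i}{d+i}{\dd} - \Bringk{i-1}{d+i}{\dd} + \binom{n-d-1}{i-1}\delta.\]
The first term vanishes since $i > p$, and the second since $i - 1 \geq p+1 > p$. The main obstacle is the remaining term $\binom{n-d-1}{i-1}\delta$. Here I would invoke the inequality $p \geq n-d-1$ recorded immediately above the corollary: it yields $i \geq p+2 \geq n-d+1$, so that $i-1 > n-d-1$ and hence $\binom{n-d-1}{i-1} = 0$ by the convention that a binomial coefficient with lower entry exceeding its upper entry is zero. (Equivalently, $i > n-d$ already forces $\delta = 0$ by its definition in Theorem \ref{mainThe}.) With all three summands zero, the case $j = d+i$ also gives $\Bringk{i}{d+i}{\iskel{\dd}{d-1}} = 0$, completing the argument.
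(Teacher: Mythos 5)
Your proof is correct and follows essentially the same route as the paper: both invoke Theorem \ref{mainThe} together with the bound $\ppd\srring{\dd}\geq n-d-1$ to conclude that the correction term $\binom{n-d-1}{i-1}\delta$ vanishes (since $i\geq p+2>n-d$ forces $\delta=0$), the other terms dying because $i,i-1>p$. The only difference is cosmetic: the paper checks just the single entry $\Bringk{p+2}{d+p+2}{\iskel{\dd}{d-1}}$, implicitly using that a minimal free resolution terminates once one stage vanishes, whereas you verify all $i\geq p+2$ directly, which makes the argument marginally more self-contained.
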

\begin{proof}
Let $p=\ppd{\srring{\dd}}$. By Proposition \ref{likebortsettfrasiste} it suffices to show that \[\Bringk{p+2}{d+p+2}{\iskel{\dd}{d-1}}=0.\] But by Theorem \ref{main}, we have \begin{flalign*}\Bringk{p+2}{d+p+2}{\iskel{\dd}{d-1}}=&\Bringk{p+2}{d+p+2}{\dd}-\Bringk{p+1}{d+p+2}{\dd}+\delta\\=&0-0-\delta=0,\end{flalign*} 
where the last equality is due to $p+2> n-d$. 
\end{proof}

\begin{corollary}\label{CM}
If $\dd$ is Cohen-Macaulay, then so is $\iskel{\dd}{d-1}$. 
\end{corollary}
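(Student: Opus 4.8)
The plan is to convert the Cohen-Macauley hypothesis on $\dd$ into a statement about projective dimension, and then to sandwich $\ppd\srring{\iskel{\dd}{d-1}}$ between the upper bound supplied by Corollary \ref{pdleq} and the universal lower bound coming from the inequality $\depth\leq\Kdim$. Since these two bounds will turn out to coincide, the squeeze forces equality, and equality is exactly what being Cohen-Macauley demands of the skeleton.

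Concretely, I would first record what the hypothesis buys us. Because $\dd$ is Cohen-Macauley and $\Kdim\srring{\dd}=d+1$, the Auslander-Buchsbaum identity gives
\[\ppd\srring{\dd}=n-\depth\srring{\dd}=n-\Kdim\srring{\dd}=n-d-1.\]
Corollary \ref{pdleq} then yields the upper bound
\[\ppd\srring{\iskel{\dd}{d-1}}\leq 1+\ppd\srring{\dd}=n-d.\]
For the matching lower bound, I would observe that the $(d-1)$-skeleton has dimension $d-1$, so $\Kdim\srring{\iskel{\dd}{d-1}}=d$, and then apply $\depth\leq\Kdim$ together with Auslander-Buchsbaum once more:
\[\ppd\srring{\iskel{\dd}{d-1}}=n-\depth\srring{\iskel{\dd}{d-1}}\geq n-\Kdim\srring{\iskel{\dd}{d-1}}=n-d.\]

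Combining the two inequalities pins down $\ppd\srring{\iskel{\dd}{d-1}}=n-d$, whence $\depth\srring{\iskel{\dd}{d-1}}=d=\Kdim\srring{\iskel{\dd}{d-1}}$, which is precisely the statement that $\iskel{\dd}{d-1}$ is Cohen-Macauley. There is no serious obstacle here, since every ingredient has already been established; the whole argument is a two-sided bound on projective dimension. The one point that deserves a moment's care is the identity $\Kdim\srring{\iskel{\dd}{d-1}}=d$, i.e.~that the skeleton genuinely has dimension $d-1$. This is immediate because $\dim\dd=d$ guarantees a face of dimension $d$, every $d$-element subset of which is a face of dimension $d-1$ surviving in $\iskel{\dd}{d-1}$.
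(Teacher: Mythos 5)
Your proposal is correct and is essentially identical to the paper's own proof: both convert the Cohen-Macauley hypothesis into $\ppd\srring{\dd}=n-d-1$ via Auslander-Buchsbaum, obtain the upper bound $\ppd\srring{\iskel{\dd}{d-1}}\leq n-d$ from Corollary \ref{pdleq}, obtain the matching lower bound from $\depth\leq\Kdim$ plus Auslander-Buchsbaum, and squeeze to conclude $\depth\srring{\iskel{\dd}{d-1}}=d$. Your extra remark verifying $\Kdim\srring{\iskel{\dd}{d-1}}=d$ is a small point the paper asserts without comment, but it changes nothing about the argument.
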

\begin{proof}
Let $\dd$ be a simplicial complex with $\dim(\dd)=d$ and $\depth\srring{\dd}=\Kdim\srring{\dd}$. As $\Kdim\srring{\iskel{\dd}{d-1}}=d$, we only need to prove that $\depth\srring{\iskel{\dd}{d-1}}=d$ as well.  

Since $\depth \srring{\iskel{\dd}{d-1}}\leq \Kdim \srring{\iskel{\dd}{d-1}}=d,$ we have by the Auslander-Buchsbaum Theorem that $\ppd \srring{\iskel{\dd}{d-1}}\geq n-d.$ On the other hand, since \begin{flalign*}\ppd \srring{\dd}=&n-\depth \srring{\dd}\\=&n-\Kdim\srring{\dd}\\=&n-(d+1),\end{flalign*} we see from Corollary \ref{pdleq} that $\ppd\srring{\iskel{\dd}{d-1}}\leq n-d$. We conclude that \[\ppd\srring{\iskel{\dd}{d-1}}= n-d\] and, by Auslander-Buchsbaum again, that $\depth \srring{\iskel{\dd}{d-1}}=d.$ 
\end{proof}
  
\section{Betti numbers of truncations and elongations of matroids}\label{SecMat}
\noindent Let $M$ be a matroid on $\{1,\ldots,n\}$, with $r(M)=k$. As was established in \cite{Bjoe}, the dimension of $\tilde{H}_{i}(M;\field{k})$ is in fact independent of the field $\field{k}$ . Thus \emph{for matroids, the ($\nn$- or $\nnn$-graded) Betti numbers are not only unique, but independent of the choice of field}. We shall therefore omit referring to or specifying a particular field $\field{k}$ throughout this section. By a slight abuse of notation we shall denote the Stanley-Reisner ideal associated to the set of independent sets $I(M)$ of $M$ simply by $\srid{M}$.
\subsection{Truncations}
Note that the $i$th truncation of $M$ corresponds to the $(k-i-1)$-skeleton of $I(M)$, a fact which enables us to invoke Theorem \ref{mainThe}. In addition, it follows from \cite[Corollary 3(b)]{JV} that the minimal free resolutions of $\srring{M}$ have length $n-k$. We thus have 
\begin{proposition}\label{hovedForMatroider}
For all $i$, we have
\[\Bring{i}{j}{\tru{M}{1}}=\begin{cases}
\Bring{i}{j}{M}, & \text{$j \leq k+i-2$.} \\ 
\Bring{i}{k+i-1}{M}-\Bring{i-1}{k+i-1}{M}\\+\binom{n-k}{i-1}\Big(\sum_{u=0}^{n-k}(-1)^{n+k+u}\sum_{v\geq n-k}\binom{v}{n-k}\Bring{u}{v}{M}\Big), & \text{$j=k+i-1$.} \\
0, & \text{$j\geq k+i$.}
\end{cases}
\]
\end{proposition}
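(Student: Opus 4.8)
The plan is to derive this proposition as a direct specialization of Theorem \ref{mainThe}. The key observation is that, when $\ind{M}$ is regarded as a simplicial complex, the first truncation $\tru{M}{1}$ is precisely its $(d-1)$-skeleton: indeed $\tru{M}{1}=\iskel{\ind{M}}{r(M)-2}$, and $\dim(\ind{M})=r(M)-1=k-1$, which we denote by $d$, so that $r(M)-2=d-1$. I would therefore set $\dd=\ind{M}$ and $d=k-1$ and specialize Theorem \ref{mainThe} to this situation.

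The first step is the index bookkeeping. Substituting $d=k-1$ turns the three ranges of Theorem \ref{mainThe} into $j\leq k+i-2$, $j=k+i-1$, and $j\geq k+i$, matching the three cases of the proposition. In the middle case the binomial factor becomes $\binom{n-d-1}{i-1}=\binom{n-k}{i-1}$, while in the formula for $\delta$ the sign becomes $(-1)^{n+d+u+1}=(-1)^{n+k+u}$ (after renaming the outer summation index to $u$) and $n-d-1=n-k$ appears both as the upper index of the inner binomial and as the lower bound of the inner sum (whose index I rename to $v$). This already reproduces the displayed expression up to the two points below.

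Next I would eliminate the case distinction attached to $\delta$ in Theorem \ref{mainThe}, where $\delta$ is set to $0$ once $i>n-d=n-k+1$. In the middle case $\delta$ always occurs multiplied by $\binom{n-k}{i-1}$, and for $i>n-k+1$ we have $i-1>n-k$, so that $\binom{n-k}{i-1}=0$; the case distinction is therefore redundant, and the single closed expression of the proposition is valid for all $i$. Then I would shorten the outer summation: by \cite[Corollary 3(b)]{JV} the minimal free resolution of $\srring{M}$ has length $n-k$, whence $\Bring{u}{v}{M}=0$ for $u>n-k$ and $\sum_{u=0}^{n}$ may be replaced by $\sum_{u=0}^{n-k}$. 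That every Betti number here is independent of $\field{k}$, which is what licenses dropping the field from the notation, is the content of \cite{Bjoe}.

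Because the proposition is in essence a transcription of Theorem \ref{mainThe}, I do not expect a genuinely hard step. The only places that demand care are the sign- and index-shift bookkeeping under $d=k-1$, and the short argument that the length bound of \cite{JV} legitimately truncates the summation; the case $i=0$ is immediate, since the zeroth Betti numbers of any Stanley-Reisner ring are concentrated in degree $0$.
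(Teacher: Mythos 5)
Your proposal is correct and takes essentially the same route as the paper, whose proof consists precisely of specializing Theorem \ref{mainThe} to $\dd=\ind{M}$ with $d=k-1$ (since $\tru{M}{1}=\iskel{\ind{M}}{k-2}$) and invoking \cite[Corollary 3(b)]{JV} to cut the outer sum off at $n-k$, with field-independence supplied by \cite{Bjoe}. Your extra bookkeeping---noting that the case split on $\delta$ is absorbed because $\binom{n-k}{i-1}=0$ for $i>n-k+1$, and checking $i=0$ directly---merely makes explicit what the paper leaves tacit.
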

\hide{In \cite{JRV} we demonstrate that the generalized Hamming weights of the first elongation $\elo{M}{1}$ of $M$ is determined by those of $M$ very directly by $d_i(\elo{M}{1})=d_{i+1}(M)$ for all $1\leq i\leq n-k-1$. 

From Proposition \ref{hovedForMatroider}, we get a similar result for truncations.}
\begin{corollary}
For all $1\leq i\leq n-k+1$, we have \[d_i(\tru{M}{1})=\min\{d_i(M),k+i-1\}.\] 
\end{corollary}
\begin{proof}
By \cite[Theorem 4]{JV} we have $d_i(\tru{M}{1})=\min\{j:\Bring{i}{j}{\tru{M}{1}}\neq0\}.$ The result now follows immediately from Proposition \ref{hovedForMatroider}.
\end{proof}

\subsection{Elongations}\label{SecElong}
When it comes to elongations, the Betti numbers of $M$ provide far less information about the Betti numbers of $\elo{M}{1}$ than what was the case with truncations. We do however have the following.
\begin{proposition}\label{bettitables}
For $i\geq1$, \[\Bnum{i}{j}{\elo{M}{l}}\neq0 \iff \Bnum{i-1}{j}{\elo{M}{l+1}}\neq0.\] 
\end{proposition}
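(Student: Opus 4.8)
The plan is to pass to reduced homology via Hochster's formula and then exploit the fact that a restriction of an elongation is again a matroid, so that Bj\"orner's description of matroid complexes applies verbatim on both sides of the claimed equivalence.

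First I would use the identity $\Bnum{i}{j}{\elo{M}{l}}=\Bring{i+1}{j}{\elo{M}{l}}$ (and $\Bnum{i-1}{j}{\elo{M}{l+1}}=\Bring{i}{j}{\elo{M}{l+1}}$, which is legitimate since $i\geq 1$) to restate the proposition as
\[\Bring{i+1}{j}{\elo{M}{l}}\neq 0\iff\Bring{i}{j}{\elo{M}{l+1}}\neq 0.\]
By Theorem \ref{Hochster} the left-hand side is nonzero exactly when there is some $\sigma\subseteq E$ with $|\sigma|=j$ and $\tilde{H}_{j-i-2}(\ind{\elo{M}{l}}_{|\sigma};\field{k})\neq 0$, while the right-hand side is nonzero exactly when some such $\sigma$ has $\tilde{H}_{j-i-1}(\ind{\elo{M}{l+1}}_{|\sigma};\field{k})\neq 0$. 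The key observation is that $\ind{\elo{M}{l}}_{|\sigma}=\ind{\elo{(M_{|\sigma})}{l}}$, since both equal $\{\tau\subseteq\sigma:n(\tau)\leq l\}$ (the rank function of $M_{|\sigma}$ agrees with that of $M$ on subsets of $\sigma$). Hence on each side we are computing the reduced homology of the independence complex of a matroid, namely an elongation of the restriction $M_{|\sigma}$.

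Next I would invoke Bj\"orner's analysis of matroid complexes \cite{Bjoe}: for any matroid $N$ the complex $\ind{N}$ has reduced homology (field-independent) concentrated in the top degree $r(N)-1$, and $\tilde{H}_{r(N)-1}(\ind{N};\field{k})\neq 0$ if and only if $N$ has no coloop. Writing $r=r(\sigma)$ and $n(\sigma)=|\sigma|-r$, the matroid $\elo{(M_{|\sigma})}{l}$ has rank $\min(|\sigma|,r+l)$; it is free (a simplex, with vanishing homology) unless $n(\sigma)>l$, in which case its rank is $r+l$ and its only possibly nonzero homology sits in degree $r+l-1$. Matching degrees, the left-hand side forces $j-i-2=r+l-1$ and the right-hand side forces $j-i-1=r+(l+1)-1$, and both give $r(\sigma)=j-i-l-1$, i.e.\ the contributing $\sigma$ on either side are exactly those with $|\sigma|=j$ and nullity $n(\sigma)=i+l+1$. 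The hypothesis $i\geq 1$ guarantees $n(\sigma)\geq l+2$, so on neither side is the relevant elongation free.

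It then remains to compare the two surviving nonvanishing conditions, which have become ``$\elo{(M_{|\sigma})}{l}$ has no coloop'' versus ``$\elo{(M_{|\sigma})}{l+1}$ has no coloop''. Here I would prove the elementary lemma that, whenever $n(\sigma)\geq l+1$, an element $e\in\sigma$ is a coloop of $\elo{(M_{|\sigma})}{l}$ if and only if it is a coloop of $M_{|\sigma}$; this follows from $r_{\elo{(M_{|\sigma})}{l}}(\sigma\setminus e)=\min(|\sigma|-1,\,r(\sigma\setminus e)+l)$ by splitting into the cases $r(\sigma\setminus e)=r(\sigma)$ and $r(\sigma\setminus e)=r(\sigma)-1$. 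Applying this with $l$ (using $n(\sigma)\geq l+1$) and with $l+1$ (using $n(\sigma)\geq l+2$, which is exactly where $i\geq 1$ enters) shows that both conditions are equivalent to ``$M_{|\sigma}$ has no coloop''. Since the two sides therefore range over an identical family of subsets $\sigma$, the equivalence follows. I expect the main obstacle to lie in the bookkeeping around the degenerate cases: ensuring the relevant elongations are genuine non-free matroids so that Bj\"orner's top-degree nonvanishing criterion applies, and verifying the coloop-preservation lemma uniformly (including when $M_{|\sigma}$ carries loops), since it is precisely in these estimates that the assumption $i\geq 1$ is needed.
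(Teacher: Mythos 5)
Your argument is correct, but it takes a genuinely different route from the paper's, which is essentially a two-line affair: the authors quote \cite[Theorem 1]{JV}, namely that $\Bnum{i}{\sigma}{M}\neq 0$ exactly when $\sigma$ is minimal with $n_{M}(\sigma)=i+1$, and then observe that since $n_{\elo{M}{l}}(\sigma)=\max(0,\,n_{M}(\sigma)-l)$, the sets minimal with $n_{\elo{M}{l}}(\sigma)=i+1$ coincide with the sets minimal with $n_{\elo{M}{l+1}}(\sigma)=i$ when $i\geq 1$ (both nullities stay positive), so summing over $|\sigma|=j$ gives the claim. What you have done is, in effect, to unfold that citation: via Hochster's formula and Bj\"orner's description of matroid complexes (homology concentrated in the top degree $r-1$, nonvanishing there precisely for coloop-free matroids) you re-derive the nonvanishing criterion in the case needed, and both proofs land on the same combinatorial characterization --- existence of $\sigma$ with $|\sigma|=j$, $n_{M}(\sigma)=i+l+1$, and $M_{|\sigma}$ coloop-free; indeed ``$\sigma$ minimal with $n_{M}(\sigma)=k\geq 1$'' is equivalent to ``$n_{M}(\sigma)=k$ and $M_{|\sigma}$ has no coloop'', since deleting $e$ lowers the nullity exactly when $e$ is not a coloop of $M_{|\sigma}$. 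Your intermediate steps all check out: $\ind{\elo{M}{l}}_{|\sigma}=\ind{\elo{(M_{|\sigma})}{l}}$ because nullity is computed identically on subsets of $\sigma$; the rank formula $r\big(\elo{(M_{|\sigma})}{l}\big)=\min(|\sigma|,\,r(\sigma)+l)$ is right; the degree matching forces $n(\sigma)=i+l+1$ on both sides; and the coloop-transfer lemma is valid under $n(\sigma)\geq l+1$, with $i\geq 1$ entering exactly where you say it does, to guarantee $n(\sigma)\geq l+2$ so that neither elongation is free. The trade-off: the paper's proof buys brevity by outsourcing all the work to \cite{JV}, while yours is self-contained modulo \cite{Bjoe}, makes the field-independence of the criterion visible, and amounts to a reproof of exactly the special case of \cite[Theorem 1]{JV} that the proposition uses.
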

\begin{proof}
According to \cite[Theorem 1]{JV}, we have that \[\Bnum{i}{\sigma}{M}\neq 0 \iff \sigma \text{ is minimal with the property that } n_{M}(\sigma)=i+1.\] Since $\beta_{i,j}=\sum_{|\sigma|=j}\beta_{i,\sigma},$ we see that 
\begin{flalign*}
\Bnum{i}{j}{\elo{M}{l}}&\neq 0\\ &\iff\\ 
\text{There is a } \sigma \text{ such that } |\sigma|=j \text{ and }& \sigma \text{ is minimal with the property that } n_{\elo{M}{l}}(\sigma)=i+1\\ &\iff\\
\text{There is a } \sigma \text{ such that } |\sigma|=j \text{ and }& \sigma \text{ is minimal with the property that } n_{\elo{M}{l+1}}(\sigma)=i\\ &\iff\\
\Bnum{i-1}{j}{\elo{M}{l+1}}&\neq 0.
\end{flalign*}
\end{proof}

In terms of Betti tables, this implies that when it comes to zeros and nonzeros the Betti table of $\srid{\elo{M}{i+1}}$ is equal to the table you get by deleting the first column from the table of $\srid{M_i}$. As the following counterexample (computed using MAGMA \cite{MAGMA}) demonstrates, there can be no result for elongations analogous to Theorem \ref{mainThe}.

Let $M$ and $N$ be the matroids on $\{1,\ldots, 8\}$ with bases 
\begin{flalign*}B(M)=\big\{
        &\{ 1, 3, 4, 6, 7 \},
        \{ 1, 2, 3, 6, 8 \},
        \{ 1, 2, 3, 4, 8 \},
        \{ 1, 2, 3, 5, 8 \},
        \{ 1, 2, 5, 6, 8 \},        \\
        &\{ 1, 2, 3, 4, 7 \},
        \{ 1, 2, 3, 5, 7 \},
        \{ 1, 2, 5, 6, 7 \},
        \{ 1, 3, 4, 5, 7 \},
        \{ 1, 3, 4, 6, 8 \},
        \\
        &\{ 1, 2, 4, 6, 8 \},
        \{ 1, 2, 4, 6, 7 \},
        \{ 1, 3, 4, 5, 8 \},
        \{ 1, 2, 4, 5, 7 \},
        \{ 1, 4, 5, 6, 7 \},
        \\
        &\{ 1, 2, 3, 6, 7 \},
        \{ 1, 3, 5, 6, 7 \},
        \{ 1, 4, 5, 6, 8 \},
        \{ 1, 3, 5, 6, 8 \},
        \{ 1, 2, 4, 5, 8 \}\big\}
\end{flalign*} and 
\begin{flalign*}B(N)=\big\{
&\{ 1, 3, 4, 6, 7 \},
        \{ 1, 2, 3, 4, 8 \},
        \{ 1, 2, 3, 5, 8 \},
        \{ 1, 2, 5, 6, 8 \},
        \{ 1, 2, 3, 4, 7 \},
        \\
        &\{ 1, 2, 3, 5, 7 \},
        \{ 1, 2, 5, 6, 7 \},
        \{ 1, 3, 4, 5, 7 \},
        \{ 1, 3, 4, 6, 8 \},
        \{ 1, 2, 4, 6, 8 \},
       \\
        & \{ 1, 2, 4, 6, 7 \},
        \{ 1, 3, 4, 5, 8 \},
        \{ 1, 2, 4, 5, 7 \},
        \{ 1, 3, 4, 5, 6 \},
        \{ 1, 2, 4, 5, 6 \},
        \\
        &\{ 1, 3, 5, 6, 7 \},
        \{ 1, 2, 3, 5, 6 \},
        \{ 1, 2, 3, 4, 6 \},
        \{ 1, 3, 5, 6, 8 \},
        \{ 1, 2, 4, 5, 8 \}
        \big\}.
\end{flalign*} Both $\srid{M}$ and $\srid{N}$ have Betti table 
\[\begin{array}{r|ccc}
& 0 & 1& 2 \\
\hline
2 & 1 & 0&0\\
3 & 0 & 0&0\\
4 & 1 & 4&0\\
5 & 0 &5&4\\
\end{array},\]
but while $\srid{\elo{M}{1}}$ has Betti table 
\[\begin{array}{r|cc}
& 1& 2 \\
\hline
5  & 1&0\\
6  &5&5\\
\end{array}\] the ideal $\srid{\elo{N}{1}}$ has Betti table \[\begin{array}{r|cc}
& 1& 2 \\
\hline
5  & 2&0\\
6  &3&4\\
\end{array}.\] This shows that the Betti numbers associated to a matroid do not determine those associated to its elongation.

\section{The $i$th skeleton ideal}\label{SecCounter}
\noindent As mentioned in the introduction, the Stanley-Reisner ideal of a skeleton is generalized in \cite{HVZ} and \cite{HJZ} to arbitrary monomial ideals. We shall briefly describe the construction as it is found in the above papers, and present a counterexample showing that our main result does not extend to these ideals.

For $\feit{a}, \feit{b}\in\nnn$ we say that $\feit{a}\leq\feit{b}$ if $\feit{a}(i)\leq \feit{b}(i)$ for $1\leq i \leq n$. Clearly, this constitutes a partial order on $\nnn$. Let $I,J\subseteq S$ be monomial ideals with (unique) minimal generating sets $\{\feit{x}^{\feit{a}_1},\ldots,\feit{x}^{\feit{a}_r}\}$ and $\{\feit{x}^{\feit{b}_1},\ldots,\feit{x}^{\feit{b}_s}\}$, respectively, and let $\feit{g}\in\nnn$ be such that $\feit{a}_i\leq \feit{g}$ and $\feit{b}_j\leq \feit{g}$ for all $1\leq i\leq r$, $1\leq j\leq s$. Define the \emph{characteristic poset} $P^{\feit{g}}_{J/I}$ of $J/I$ with respect to $\feit{g}$ to be \[P^{\feit{g}}_{J/I}=\{\feit{b}\in\nnn:\feit{b}\leq \feit{g},\feit{b}\geq \feit{b}_j \text{ for some }j,\feit{b}\not\geq \feit{a}_i \text{ for all }i\}.\] For $\feit{b}\in\nnn$, let $\rho(\feit{b})=|i:\feit{b}(i)=\feit{g}(i)|$. It is demonstrated in \cite[Corollary 2.6]{HVZ} that $\Kdim J/I=\max\{\rho(\feit{b}):\feit{b}\in P^{\feit{g}}_{J/I}\}$.

The $j$th \emph{generalized skeleton ideal} $\srid{j}$ is the ideal generated by $\{\feit{x}^{\feit{a}_1},\ldots,\feit{x}^{\feit{a}_r}\}\cup\{\feit{x}^{\feit{b}}:\feit{b}\in\nnn, \rho(\feit{b})>j\}.$ By \cite[Corollary 2.5]{HJZ} these ideals form a chain $I=\srid{d}\subseteq \srid{d-1}\subseteq\cdots\subseteq \srid 0\subseteq S$ with the property that $\srring{j}$ is Cohen-Macauley for all $j\leq \depth\srring{}$, and $\depth\srring{}=\max\{j:\srring{j} \text{ is Cohen-Macauley}\}$. In other words, these ideals successfully generalize \eqref{Hibis} from the introduction. Furthermore, in the special case $J=S$, $I=\srid{\dd}$, and $\feit{g}=(1,1,\ldots,1)$, we have $\srid{j}=\srid{\iskel{\dd}{j}}$.

Now, let $M$, $N$ be the matroids on $\{1,\ldots,6\}$ with \begin{flalign*}B(M)=\big\{&\{ 1, 3, 6 \},\{ 1, 3, 5 \},\{ 4, 5, 6 \},\{ 1, 3, 4 \},\{ 2, 3, 6 \},\{ 1, 2, 5 \},\\
    &\{ 2, 4, 6 \},\{ 1, 4, 6 \},\{ 3, 5, 6 \},\{ 2, 3, 4 \},\{ 1, 2, 3 \},\{ 1, 5, 6 \},\\
    &\{ 3, 4, 5 \}, \{ 1, 4, 5 \}, \{ 1, 2, 4 \}, \{ 2, 5, 6 \},\{ 2, 3, 5 \},\{ 3, 4, 6 \} \big\}
\end{flalign*}  and \begin{flalign*}B(N)=\big\{&\{ 1, 3, 6 \}, \{ 1, 3, 5 \},\{ 4, 5, 6 \},\{ 1, 3, 4 \}, \{ 1, 2, 6 \},\{ 2, 3, 6 \},\\
    &\{ 1, 2, 5 \},\{ 2, 4, 6 \},\{ 3, 5, 6 \},\{ 2, 3, 4 \},\{ 1, 2, 3 \},\{ 1, 5, 6 \},\\
    &\{ 3, 4, 5 \},\{ 1, 4, 5 \},\{ 1, 2, 4 \},\{ 2, 5, 6 \},\{ 2, 4, 5 \},\{ 3, 4, 6 \} \big\}.
\end{flalign*}
Then \[\beta[\srring{M}]=\begin{array}{r|cccc}
& 0 & 1 & 2 & 3  \\
\hline
0 & 1 & 0 & 0 & 0  \\
1 & 0 & 0 & 0 & 0  \\
2 & 0 & 2 & 0 & 0  \\
3 & 0 & 9 & 18 & 8 \\
\end{array}=\beta[\srring{N}],\] for all base fields $\field{k}$. However, if we take $\feit{g}=(1,2,1,1,1,1)$ and $J=S=\polr{Q}{x}{n}$ in the above construction, we get 
\[\beta[S/(\srid{M})_{(1)};\mathbb{Q}]=\begin{array}{r|ccccc}
& 0 & 1 & 2 & 3 & 4 \\
\hline
0 & 1 & 0 & 0 & 0 & 0   \\
1 & 0 & 0 & 0 & 0 & 0 \\
2 & 0 & 12 & 30 & 12 & 2  \\
3 & 0 & 17 & 24 & 24 & 8\\
\end{array}\] while \[\beta[S/(\srid{N})_{(1)};\mathbb{Q}]=\begin{array}{r|ccccc}
& 0 & 1 & 2 & 3 & 4  \\
\hline
0 & 1 & 0 & 0 & 0 & 0   \\
1 & 0 & 0 & 0 & 0 & 0 \\
2 & 0 & 11 & 27 & 9 & 1  \\
3 & 0 & 18 & 27 & 27 & 9\\
\end{array}.\]
We conclude that the statement of Theorem \ref{mainThe} does not necessarily hold if one replaces the Stanley-Reisner ideals of skeletons with generalized skeleton ideals.

\end{document}